\newtheorem{theo}{Theorem}
\newtheorem{lem} [theo]{Lemma}
\newtheorem{cor}[theo]{Corollary}
\newtheorem{prop}[theo]{Proposition}
\makeatletter \@addtoreset{equation}{section}
\def\Oge{\mathop{\Omega}_{\geq}}
\def\Oeq{\mathop{\Omega}_{\mu=}}
\def\Z{\mathbb{Z}}
\newcommand{\area}{\operatorname{\texttt{area}}}
\newcommand{\dinv}{\operatorname{\texttt{dinv}}}
\newcommand{\bounce}{\operatorname{\texttt{bounce}}}
\def\k{\vec{k}}
\def \CD {{\cal D}}
\def\oD{\overline{D}}
\title[The $q,t$-symmetry for $C_{(k_1,k_2,k_3)}(q,t)$ and $C_{(k,k,k,k)}(q,t)$]{The $q,t$-symmetry of the generalized $q,t$-Catalan number $C_{(k_1,k_2,k_3)}(q,t)$ and $C_{(k,k,k,k)}(q,t)$}
\author{Guoce Xin$^{1}$ and Yingrui Zhang$^{2,*}$}
 \address{ $^{1,2}$School of Mathematical Sciences, Capital Normal University,
 Beijing 100048, PR China}
 \email{$^1$\texttt{guoce\_xin@163.com}\ \& $^2$\texttt{zyrzuhe@126.com}}
\date{May 24, 2022}
\thanks{This work was partially supported by NSFC(12071311).}
\begin{document}

\begin{abstract}
We give two proofs of the $q,t$-symmetry of the generalized $q,t$-Catalan number $C_{\vec{k}}(q,t)$ for $\vec{k}=(k_1,k_2,k_3)$. One is by using MacMahon's partition analysis as we proposed; the other is
a direct bijection. We also prove $C_{(k,k,k,k)}(q,t) = C_{(k,k,k,k)}(t,q)$ by using MacMahon's partition analysis.
\end{abstract}

\maketitle

\noindent
\begin{small}
 \emph{Mathematic subject classification}: Primary 05A19; Secondary 05E99.
\end{small}

\noindent
\begin{small}
\emph{Keywords}: $q,t$-Catalan numbers; $q,t$-symmetric; MacMahon's partition analysis.
\end{small}

\section{Introduction}
In their study of the space $\mathcal{DH}_n$ of diagonal harmonics \cite{qt-Catalan}, Garsia and Haiman introduced a $q, t$-analogue of the Catalan numbers,
which they called the $q, t$-Catalan sequence. There are several equivalent characterizations of the (original) $q, t$-Catalan sequence, which includes two combinatorial
formulas:
One is Haiman's dinv-area $q,t$-Catalan sequence; the other is Haglund's area-bounce $q,t$-Catalan sequence. See \cite{Haglund-bounce,Higher-qt-Catalan}.
Further information about $q,t$-Catalan sequence and related results can be found in \cite{Garsia-Haglund-proof-positivity,Gorsky-Mazin,Hag-book08}.
In \cite{Xin-Zhang2}, we introduced $q,t$-Catalan numbers $C_{\lambda}(q,t)$ of type (a partition) $\lambda$ as an extension of Haglund and Haiman's combinatorial
formula for ordinary $q,t$-Catalan numbers.
We also investigated the $q,t$-symmetry of $C_{\lambda}(q,t)$:
The symmetry is easily proved when the length of $\lambda$ is $\ell(\lambda)=2$; The symmetry  may be proved by using MacMahon's partition analysis technique when $\ell(\lambda)=3$;
No symmetry holds in general when $\ell(\lambda)\ge 4$, but we conjecture the $q,t$-symmetry of $C_{\lambda}(q,t)$ when
$\lambda=((a+1)^s,a^{n-s})$.

We find it better to define $C_{\k}(q,t)$ for any ordered partition, i.e., vector $\k=(k_1,\dots, k_n)$ of positive integers, by
\begin{align*}
  C_{\k}(q,t)=\sum_{ D \in \CD_{\k}} q^{\area(D)} t^{\bounce(D)},
\end{align*}
where the sum ranges over all $\k$-Dyck paths $D$, and $\area(D)$ and $\bounce(D)$ are two statistics of $D$.

In the special case when $k_i = k$ for all $i$, denote by $k^n = (k,\dots, k)\in \mathbb{Z}^n$ and
\begin{align*}
  C_{k^n}(q,t)=\sum_{ D \in \CD_{k^n}} q^{\area(D)} t^{\bounce(D)},
\end{align*}
where the sum ranges over all $k^n$-Dyck paths $D$.

Our main result is the following.
\begin{theo}\label{t-qtsym}
For $\k=(k_1,k_2,k_3)$, the $q,t$-Catalan number $C_{\k}(q,t)$ is $q,t$-symmetric, i.e., $C_{\k}(q,t)=C_{\k}(t,q)$.
\end{theo}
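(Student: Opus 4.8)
The plan is to follow the route indicated in the introduction and establish the symmetry through MacMahon's partition analysis. First I would fix a combinatorial encoding of the paths in $\CD_{\k}$. Every $\vec{k}$-Dyck path is determined by its bounce path, and I would record a path $D$ by a tuple of non-negative integers: the lengths of the successive runs of the bounce path (which, since $\ell(\k)=3$, consist of only a bounded number of segments) together with the parameters measuring how far the path itself rises above the bounce path within the strips that the bounce path cuts out. The requirement that such a tuple actually arise from a genuine $\vec{k}$-Dyck path becomes a fixed, finite system of linear inequalities in these integer parameters, and both $\area(D)$ and $\bounce(D)$ are then linear forms in the same parameters.

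Next I would express $C_{\k}(q,t)$ as a generating function over the lattice points of the resulting polyhedral cone. Introducing one auxiliary (slack) variable for each inequality and applying the operator $\Oge$ turns the constrained sum into an unconstrained geometric series, so that $C_{\k}(q,t)$ is written as $\Oge$ applied to an explicit rational function whose variables track the $\area$ and $\bounce$ statistics together with the slacks. Because only three components are present, the number of free and auxiliary variables is small and fixed. I would then evaluate the $\Oge$ operator by the standard elimination rules of partition analysis, removing the auxiliary variables one at a time. This reduction terminates and produces $C_{\k}(q,t)$ as an explicit rational function in $q$ and $t$: concretely a sum of a bounded number of terms, each a ratio of products of factors $1 - q^a t^b$.

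The final and hardest step is to prove that this rational function is invariant under the exchange $q \leftrightarrow t$. The individual summands coming out of the elimination are \emph{not} separately symmetric; the symmetry appears only after they are combined over a common denominator. I expect to handle this either by verifying directly (but within a bounded computation) that the combined numerator is symmetric, or, more transparently, by pairing the partial-fraction terms through an involution that interchanges their roles under $q \leftrightarrow t$. Organizing this cancellation so that it is both correct and illuminating — rather than a brute-force identity check that obscures why $\ell(\k)=3$ is the borderline case — is the real obstacle of the proof.

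I note that the abstract also promises a \emph{direct bijection} proof. Were I to pursue that instead, the plan would be to construct an explicit involution $\phi$ on $\CD_{\k}$ with $\area(\phi(D))=\bounce(D)$ and $\bounce(\phi(D))=\area(D)$; the difficulty there is exactly the well-known difficulty of the classical $q,t$-Catalan symmetry, so I would expect the bijective argument to exploit the special three-part structure heavily, and I would treat the partition-analysis argument above as the primary line of attack.
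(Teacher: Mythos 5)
Your overall route --- encode paths by a few integer parameters, write $C_{\k}(q,t)$ as an $\Oge$-image of a rational function over a polyhedral region, eliminate, and read off symmetry from the combined closed form --- is exactly the paper's first proof in outline. But there is a genuine gap at the step where you assert that ``both $\area(D)$ and $\bounce(D)$ are then linear forms in the same parameters.'' For $\k=(k_1,k_2,k_3)$, with paths encoded by the red ranks $(0,r_2,r_3)$ subject to $0\le r_2\le k_1$ and $0\le r_3\le r_2+k_2$, the area is indeed linear ($r_2+r_3$), but the bounce is only \emph{piecewise} linear and, worse, one of its branches is $2(k_1-r_2)+\lceil \tfrac{r_2+k_2-r_3}{2}\rceil$, which is not a linear form at all; the other branch involves $\min(r_2,k_2)$. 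Your ``unconstrained geometric series'' step fails on this branch: a ceiling cannot be absorbed into a product of geometric factors. The paper's actual work is precisely to repair this: it splits the sum into four regions (according to the sign of $r_2-k_2$ and of $r_2+k_2-r_3-2\min(r_2,k_2)$), and on the ceiling branches it introduces a new integer $p=\lceil\tfrac{r_2+k_2-r_3}{2}\rceil$ linearized via the identity $\chi(2p\in\{A,A+1\})=\Omega_{\mu=}(1+\mu)\mu^{A-2p}$ (Lemma~\ref{skillformula}), at the cost of an extra equality-type Omega operator. Without some device of this kind your plan does not get off the ground.

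Two smaller points. First, for a \emph{fixed} $\k$ the quantity $C_{\k}(q,t)$ is a polynomial, so ``an explicit rational function in $q$ and $t$, a sum of ratios of factors $1-q^at^b$'' is not the right target; the paper instead sums over all $k_1,k_2,k_3\ge0$ with catalytic variables $x_1,x_2,x_3$ and proves the $q,t$-symmetry of the full generating function $F(x_1,x_2,x_3,1,1,q,t)$, which does come out as a manifestly symmetric product formula~\eqref{qtsym-formual}. You should either adopt that framing or explain how the $k_i$-dependence survives the elimination. Second, your anticipated ``hardest step'' (pairing non-symmetric partial-fraction terms by an involution) is, in the paper, simply a direct simplification of $F_{11}+F_{12}+F_{21}+F_{22}$ into a visibly symmetric closed form; the real obstacle is the one you passed over, namely the non-linearity of the bounce statistic.
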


A direct corollary is the following, since
$$C_\lambda(q,t) = \sum_{\lambda(\k)=\lambda} C_{\k}(q,t),$$
where $\lambda(\k)$ is the partition obtained by arranging the entries of $\k$ decreasingly.
\begin{cor}
For any partition $\lambda$ of length $3$, the $q,t$-Catalan number $C_{\lambda}(q,t)$ is $q,t$-symmetric, i.e., $C_{\lambda}(q,t)=C_{\lambda}(t,q)$.
\end{cor}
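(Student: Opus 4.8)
First, the Corollary reduces immediately to Theorem~\ref{t-qtsym}. From the displayed identity $C_\lambda(q,t)=\sum_{\lambda(\k)=\lambda}C_{\k}(q,t)$, a partition $\lambda$ of length $3$ is a sum of terms $C_{\k}(q,t)$ over the vectors $\k$ that rearrange to $\lambda$, and each such $\k$ again has length $3$. By Theorem~\ref{t-qtsym} every summand satisfies $C_{\k}(q,t)=C_{\k}(t,q)$, so the sum does too. Thus all the substance lies in Theorem~\ref{t-qtsym}, and I describe how I would prove that.

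My plan is to first reduce the two statistics to explicit (piecewise) linear data on a lattice and then apply MacMahon's partition analysis. I would encode each $D\in\CD_{\k}$ with $\k=(k_1,k_2,k_3)$ by an integer vector recording, row by row, the horizontal displacement of the path from the boundary determined by $\k$; the defining conditions of a $\k$-Dyck path then become an explicit finite system of linear inequalities in these coordinates, and $\area(D)$ becomes the sum of the coordinates, a single linear form. The difficulty is $\bounce(D)$: it is produced by the recursively defined bounce path and is not linear. For $n=3$, however, the bounce path returns to the diagonal after a bounded number of bounces, so $\bounce$ is piecewise linear, with one linear branch on each chamber cut out by the finitely many inequalities deciding where successive bounces land.

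Next I would run partition analysis chamber by chamber. On a fixed chamber the restricted generating function is $\sum q^{\area(D)}t^{\bounce(D)}$ over the lattice points of a polyhedral region with both exponents linear; introducing one auxiliary variable per inequality and applying the Omega operator $\Oge$ to eliminate the slack variables yields a rational function in $q,t$. Summing the finitely many chamber contributions gives a closed rational expression for $C_{\k}(q,t)$, and the symmetry $C_{\k}(q,t)=C_{\k}(t,q)$ would follow by showing this expression is invariant under $q\leftrightarrow t$. The cleanest route is to exhibit an involution on the set of chambers that interchanges the two linear forms $\area$ and $\bounce$, so that the $\Oge$-outputs pair into manifestly $(q,t)$-symmetric pairs, rather than verifying symmetry by brute simplification.

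Alternatively one can avoid generating functions and build a direct bijection $\phi\colon\CD_{\k}\to\CD_{\k}$ with $\area(\phi(D))=\bounce(D)$ and $\bounce(\phi(D))=\area(D)$. For $\k=(k_1,k_2,k_3)$ the bounce path decomposes $D$ into at most three bounce blocks, and $\phi$ can be defined by reflecting and redistributing these blocks so that the roles of area and bounce are swapped. In either approach the \emph{bounce} statistic is the main obstacle: in the partition-analysis route the chamber bookkeeping must be organized so that the symmetry survives the $\Oge$ computation uniformly in the three free parameters $k_1,k_2,k_3$, not merely for fixed numerical values; in the bijection route the work is to prove that $\phi$ lands in $\CD_{\k}$ and exactly trades the two statistics. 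I expect this final symmetry check, respectively the verification that $\phi$ is a statistic-swapping bijection, to be the crux of the argument.
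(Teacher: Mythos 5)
Your first paragraph is exactly the paper's proof of this corollary: the identity $C_\lambda(q,t)=\sum_{\lambda(\k)=\lambda}C_{\k}(q,t)$ reduces the statement to Theorem~\ref{t-qtsym}, since every $\k$ rearranging to a length-$3$ partition has length $3$. The remaining sketch of Theorem~\ref{t-qtsym} also matches the two routes the paper actually takes (partition analysis on the piecewise-linear chambers of \texttt{bounce}, and a statistic-swapping involution), so the proposal is correct and follows essentially the same approach.
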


Note that one can define
$$H_{\k}(q,t)=\sum_{ D \in \CD_{\k}} q^{\dinv(D)} t^{\area(D)}, \quad  \text{ and }  H_{\lambda}=\sum_{\lambda(\k)=\lambda} H_{\k}(q,t).$$
We have $C_\lambda(q,t)=H_\lambda(q,t)$ by the fact that the sweep map takes dinv to area, and area to bounce, but
$C_{\k}(q,t)\neq H_{\k}(q,t)$ because the sweep map takes a path in $\CD_{\k}$ to a path in $\CD_{\k'}$ for some $k'$ with $\lambda(\k)=\lambda(\k')$.
Indeed, $H_{\k}(q,t)$ is not symmetric even when $\k$ is of length $3$. In this sense, the bounce statistic is nicer than the dinv statistic.

We give two proofs of Theorem $\ref{t-qtsym}$. One is by MacMahon's partition analysis as we proposed; the other is by
a direct bijection. For $\k=(k_1,k_2,k_3)$, the statistics area and bounce have a simple description.
Dyck paths $D\in \CD_{\k}$ are uniquely determined by their red ranks $(r_1=0,r_2,r_3)$, which satisfy the conditions $0\le r_2\le k_1$ and $0\le r_3\le r_2+k_2$.
Let $\lceil \alpha \rceil$ denote the least integer greater than or equal to $\alpha$.
The area of $D$ is simply $\area(D)=r_2+r_3$, and the bounce of $D$ is given by
\begin{align*}
\bounce(D)=\left\{
               \begin{array}{ll}
                 2(k_1-r_2)+r_2+k_2-r_3-\min(r_2,k_2), & \text{ if } r_2+k_2-r_3 \geq 2\min(r_2,k_2) ; \\
                 2(k_1-r_2)+ \lceil {\frac{r_2+k_2-r_3} 2}\rceil, & \text{otherwise.}
               \end{array}
             \right.
\end{align*}

The theorem can be proved by MacMahon's partition analysis, which was developed by MacMahon for solving problems
in partition theory. The fundamental ingredient is MacMahon's Omega operator on the $\lambda$ variables defined by
\begin{align*}
\Oge \sum_{i_1,i_2,\dots i_n\in \Z} A_{i_1,\dots, i_n} \lambda_1^{i_1}\lambda_2^{i_2}\cdots \lambda_{n}^{i_n} &= \sum_{i_1,i_2,\dots i_n\ge 0} A_{i_1,\dots, i_n},\\
\Oeq \sum_{i_1,i_2,\dots i_n\in \Z} A_{i_1,\dots, i_n} \mu_1^{i_1}\mu_2^{i_2}\cdots \mu_{n}^{i_n} &= A_{0,\dots, 0}.
\end{align*}
In other words, the $\Oge$ operator extracts all terms with nonnegative power in $\lambda_1,\dots, \lambda_n$ and then set them to be equal to $1$,
and the $\Oeq$ operator extracts the term independent of the $\mu$ variables. The to-be-eliminated variables $\lambda_i$ or $\mu_j$ are
usually clear from the context.
Andrews et. al. developed the Mathematica package Omega \cite{Ome-package} to eliminate the $\lambda$ variables. We use Xin's Ell2 Maple package \cite{fastCT} in our computation.

The paper is organized as follows. In this introduction, we have introduced the basic concepts.
In Section \ref{s-Mac-proof} we prove Theorem \ref{t-qtsym} by MacMahon's partition analysis.
In Section \ref{s-bije-proof}, we give a bijective proof of Theorem \ref{t-qtsym}.
In Section \ref{s-Mac-proof2} we prove $ C_{k^4}(q,t) =  C_{k^4}(t,q)$  by MacMahon's partition analysis.

\section{An Algebraic Proof of Theorem \ref{t-qtsym}\label{s-Mac-proof}}

\subsection{Basic Idea}
For $\k=(k_1,k_2,k_3)$, Dyck paths $D\in \CD_{\k}$ are uniquely determined by their red ranks $(r_1=0,r_2,r_3)$,
which satisfy the conditions $0\le r_2\le k_1$ and $0\le r_3\le r_2+k_2$.

The area of $D$ is simply given by $
\area(D)=r_2+r_3.
$
We also have an explicit formula of $\bounce(D)=b(r_2,r_3)$ as follows.
$$b(r_2,r_3)=\left\{
               \begin{array}{ll}
                 2(k_1-r_2)+r_2+k_2-r_3-\min(r_2,k_2), & \text{ if } r_2+k_2-r_3 \geq 2\min(r_2,k_2) ; \\
                 2(k_1-r_2)+ \lceil {\frac{r_2+k_2-r_3} 2}\rceil, & \text{otherwise.}
               \end{array}
             \right.
$$
This formula is piecewise linear in $r_2$ and $r_3$.

We prove Theorem $\ref{t-qtsym}$ by MacMahon's partition analysis technique, which applies to
sum over linear constraints. Indeed, we can construct the generating function with respect to $k_1,k_2,k_3, r_2,r_3$:
\begin{align*}
F(x_1,x_2,x_3,y_2,y_3,q,t) &= \sum_{k_1,k_2,k_3\ge 0}x_1^{k_1}x_2^{k_2}x_3^{k_3} \sum_{D\in \CD_{(k_1,k_2,k_3)}} q^{\area(D)} t^{\bounce(D)} y_2^{r_2}y_3^{r_3}  \\
                       &= \sum_{k_1,k_2,k_3\ge 0}x_1^{k_1}x_2^{k_2}x_3^{k_3} \sum_{0\le r_2\le k_1, \ 0\le r_3\le r_2+k_2   } q^{r_2+r_3} t^{b(r_2,r_3)} y_2^{r_2}y_3^{r_3},
\end{align*}
where we allowed $k_i=0$ for $i=1,2,3$, which does not affect the computation.

Then it is sufficient to prove the $q,t$ symmetry of
$$F(x_1,x_2,x_3,1,1,q,t)= \sum_{k_1,k_2,k_3\ge 0}x_1^{k_1}x_2^{k_2}x_3^{k_3} C_{(k_1,k_2,k_3)}(q,t).$$

We need to use the following Lemma to simplify our computation.
\begin{lem}\label{skillformula}
Let $A$ and $z$ be integers with $z\ge 2$. Write $p=\lceil {\frac{A} z}\rceil$, so $ 0\le pz-A\le z-1$. Then
$$\chi(zp \in \{A+i|i=0\dots z-1\})=\mathop{\Omega}_{\mu=} \sum_{i=0}^{z-1}\mu^{A-zp+i}=\mathop{\Omega}_{\mu=}(\sum_{i=0}^{z-1}\mu^i)\mu^{A-zp}$$
where $\chi(true)=1$ and $\chi(false)=0$.
\end{lem}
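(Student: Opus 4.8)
The plan is to show that all three expressions in the asserted chain of equalities are in fact equal to the constant $1$, and then to verify the three equalities one at a time. The whole statement is therefore a pleasant tautology once one unwinds the ceiling condition, and the proof reduces to bookkeeping with a block of consecutive integers.

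First I would dispose of the rightmost equality, which is purely formal: factoring the common power out of the sum gives $\sum_{i=0}^{z-1}\mu^{A-zp+i}=\mu^{A-zp}\sum_{i=0}^{z-1}\mu^{i}$, so no real content is involved there and the two $\Oeq$-expressions are literally the same series.

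Next I would evaluate the middle expression. Since there is a single elimination variable $\mu$, the operator $\Oeq$ simply extracts the coefficient of $\mu^{0}$. The exponents $A-zp+i$ for $i=0,1,\dots,z-1$ run through $z$ consecutive integers, from $A-zp$ up to $A-zp+z-1$. The hypothesis $p=\lceil A/z\rceil$, equivalently $0\le zp-A\le z-1$, forces $-(z-1)\le A-zp\le 0$ and hence $0\le A-zp+z-1\le z-1$; thus this block of $z$ consecutive integers straddles $0$, so exactly one of them vanishes, namely the term with $i=zp-A$. Therefore the coefficient of $\mu^{0}$ equals $1$, which gives $\Oeq\sum_{i=0}^{z-1}\mu^{A-zp+i}=1$. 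Finally I would check the leftmost expression: by definition $\chi(zp\in\{A+i\mid i=0,\dots,z-1\})=\chi(zp-A\in\{0,1,\dots,z-1\})$, and the same two-sided bound $0\le zp-A\le z-1$ makes this membership automatic, so the indicator also equals $1$.

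The argument is elementary; the only point requiring any care — and the closest thing to an obstacle — is translating the ceiling identity $p=\lceil A/z\rceil$ into the bound $0\le zp-A\le z-1$ and then tracking the resulting range of exponents, so as to be certain that the consecutive block of length $z$ contains $0$ exactly once rather than zero times or twice. Once that bound is in hand, both the $\Oeq$ evaluation and the indicator follow immediately, and the three quantities are seen to coincide.
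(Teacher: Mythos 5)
Your proof is correct; the paper itself dismisses this lemma with ``It's trivial,'' and your argument supplies exactly the routine verification that is being waved away: the bound $0\le zp-A\le z-1$ places $0$ in the block of $z$ consecutive exponents exactly once, so all three expressions equal $1$. Nothing further is needed.
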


\begin{proof}
It's trivial.
\end{proof}

\subsection{Crude generating function}
Due to the piecewise linearity of $b(r_2,r_3)$, we divide the generating function $F(x_1,x_2,x_3,y_2,y_3,q,t)$ into two parts,
each having two cases. It is convenient to write the generating function $F(x_1,x_2,x_3,y_2,y_3,q,t)=F=F_1+F_2=F_{11}+F_{12}+F_{21}+F_{22}$.

Part 1: $r_2>k_2$ (written as $r_2-k_2-1 \geq 0$), which implies that $\min(r_2, k_2) = k_2$. The corresponding generating function is $F_1$, and we divide it into $F_1=F_{11}+F_{12}$
by considering the following two cases:

   Case 1: $r_2+k_2-r_3 \geq 2k_2$, i.e., $r_2-r_3-k_2 \ge 0$. Then, we have $$b(r_2,r_3) = 2(k_1-r_2)+r_2+k_2-r_3-k_2 = 2k_1-r_2-r_3.$$
   So we have
\begin{align*}
F_{11}=&\!\!\!\!\!\sum_{k_1,k_2,k_3\ge 0}x_1^{k_1}x_2^{k_2}x_3^{k_3} \sum_{0\le r_2\le k_1, \ 0\le r_3\le r_2+k_2,  \atop  r_2-k_2-1 \geq 0, \  r_2-r_3-k_2\ge 0} q^{r_2+r_3} t^{2k_1-r_2-r_3} y_2^{r_2}y_3^{r_3}\\
=&\!\!\!\!\!\!\!\!\!\!\!\!\sum_{k_1,k_2,k_3,r_2,r_3\ge 0} \!\!\Oge \left( {x_{{1}}}^{k_{{1}}}{x_{{2}}}^{k_{{2}}}{x_{{3}}}^{k_{{3}}}{y_{{2}}}^{r_{
{2}}}{y_{{3}}}^{r_{{3}}}{q}^{r_{{2}}+r_{{3}}}{t}^{2\,k_{{1}}-r_{{2}}-r
_{{3}}}{\lambda_{{1}}}^{k_{{1}}-r_{{2}}}{\lambda_{{2}}}^{k_{{2}}+r_{{2
}}-r_{{3}}}{\lambda_{{3}}}^{r_{{2}}-k_{{2}}-1}{\lambda_{{4}}}^{r_{{2}}
-k_{{2}}-r_{{3}}} \right)\\
=&\Oge\frac {1}{{\lambda_{{3}}} \left( 1-x_{{1}}{t}^{2}\lambda_{{1}} \right) \left( 1-{\frac {x_{{2}}\lambda_{{2}}}{\lambda_{{3}}\lambda_{{4}}}}
 \right) \left( 1-{\frac {\lambda_{{2}}\lambda_{{3}}\lambda_{{4}
}qy_{{2}}}{t\lambda_{{1}}}} \right) \left( 1-{\frac {y_{{3}}q}{
\lambda_{{2}}\lambda_{{4}}t}} \right)(1-x_3)};
\end{align*}

  Case 2: $r_2+k_2-r_3 < 2k_2$, i.e., $k_2-r_2+r_3-1 \ge 0$. Then, we have
  \begin{align*}
  b(r_2,r_3)= 2(k_1-r_2)+ \lceil {\frac{r_2+k_2-r_3} 2}\rceil.
  \end{align*}

Let $p=\lceil {\frac{r_2+k_2-r_3} 2}\rceil$. Note that $p\ge 0$, because the red ranks $(r_1=0,r_2,r_3)$ of a Dyck path $D\in \CD_{\k}$ satisfy  $0\le r_3 \le r_2+k_2$.
By Lemma \ref{skillformula}, we can write the generating function $F_{12}$ as follows.
\begin{align*}
F_{12}=&\!\!\!\!\!\sum_{k_1,k_2,k_3\ge 0, }x_1^{k_1}x_2^{k_2}x_3^{k_3} \sum_{0\le r_2\le k_1, \ 0\le r_3\le r_2+k_2,\atop {k_2-r_2+r_3-1 \ge 0,\  r_2-k_2-1 \geq 0, \atop  p\ge 0,\ 2p=r_2+k_2-r_3 \text{ or } 2p=r_2+k_2-r_3+1  }} q^{r_2+r_3} t^{2(k_1-r_2)+p} y_2^{r_2}y_3^{r_3}\\
\\
=&\sum_{k_1,k_2,k_3,r_2,r_3,p\ge 0} \Oge \mathop{\Omega}_{\mu=} ({x_{{1}}}^{k_{{1}}}{x_{{2}}}^{k_{{2}}}{x_{{3}}}^{k_{{3}}}{y_{{2}}}^{r_{{2}}}{y_{{3}}}^{r_
{{3}}}{q}^{r_{{2}}+r_{{3}}}{t}^{2\,k_{{1}}-2\,r_{{2}}+p} \left( 1+\mu
 \right) {\mu}^{r_{{2}}+k_{{2}}-r_{{3}}-2\,p}\\
 &\ \ \ \ \ \ \ \ \ \ \ \ \ \ \ \ \ \ \ \ \ \ \ \ \ \ \ \ \ \ \ \ \ \ \ \ \ \ \ \ \ \ \ \ \ \ \ \ \ \ {\lambda_{{1}}}^{k_{{1}}-
r_{{2}}}{\lambda_{{2}}}^{k_{{2}}+r_{{2}}-r_{{3}}}{\lambda_{{3}}}^{r_{{
2}}-k_{{2}}-1}{\lambda_{{4}}}^{k_{{2}}+r_{{3}}-r_{{2}}-1})\\
\\
=&\Oge \mathop{\Omega}_{\mu=} \frac {1+\mu}{\lambda_3 \lambda_4 (1-x_1t^2\lambda_1)\left(1-\frac{x_2\mu\lambda_2\lambda_4}{\lambda_3}\right)
\left(1-\frac{y_2q\lambda_2\lambda_3\mu}{t^2\lambda_1\lambda_4}\right)\left(1-\frac{y_3q\lambda_4}{\lambda_2\mu}\right)
\left(1-\frac{t}{\mu^2}\right)(1-x_3)}.
\end{align*}

Part 2: $k_2\ge r_2$ (written as $k_2-r_2\ge 0$), which implies that $\min(r_2,k_2)=r_2$. The corresponding generating function is $F_2$.
By a similar computation for Part 1, we divide $F_2$ into $F_{21}+F_{22}$, whose formulas are given as follows.

Case 1: $r_2+k_2-r_3 \geq 2r_2$, i.e., $k_2-r_2-r_3 \ge 0$. Then, we have $b(r_2,r_3)=2k_1-2r_2+k_2-r_3$
and
\begin{align*}
F_{21}=
\Oge \frac{1}{(1-x_1t^2\lambda_1)(1-x_2t\lambda_2\lambda_3\lambda_4)
\left(1-\frac{y_2q\lambda}{t^2\lambda_1\lambda_3\lambda_4}\right)\left(1-\frac{y_3q}{t\lambda_2\lambda_4}\right)(1-x_3)}.
\end{align*}

Case 2: $r_2+k_2-r_3 < 2r_2$, i.e., $r_2+r_3-k_2-1 \ge 0$. Then, we have
  \begin{align*}
  b(r_2,r_3)= 2(k_1-r_2)+ \lceil {\frac{r_2+k_2-r_3} 2}\rceil
  \end{align*}
and
\begin{align*}
F_{22}=\Oge \mathop{\Omega}_{\mu=} \frac {1+\mu}{\lambda_4 (1-x_1t^2\lambda_1)\left(1-\frac{x_2\mu\lambda_2\lambda_3}{\lambda_4}\right)
\left(1-\frac{y_2q\lambda_2\lambda_4\mu}{t^2\lambda_1\lambda_3}\right)\left(1-\frac{y_3q\lambda_4}{\lambda_2\mu}\right)
\left(1-\frac{t}{\mu^2}\right)(1-x_3)}.
\end{align*}

\subsection{Obtain the generating function}
By using the maple package Ell2, we obtain:

\begin{align*}
F_{11}&=\frac{x_{{1}} y_{{2}} q \left({t+y_{{3}}q-{q}^{2}tx_{{1}}y_{{2}}y_{{3}}}\right)} {\left({1-x_{{1}}{t}^{2}}\right)\left({1-qy_{{2}}tx_{{1}}}\right)
\left({1-{q}^{2}y_{{2}}y_{{3}}x_{{1}}}\right)\left({1-x_{{2}}qy_{{2}}x_{{1}}t}\right)(1-x_3)},\\
F_{12}&=\frac{q ^{4}y_{{3}} ^{2}x_{{2}} y_{{2}} ^{2}x_{{1}} ^{2}\left({t+y_{{3}}q}\right)} {\left({1-x_{{1}}{t}^{2}}\right)\left({1-x_{{2}}qy_{{2}}x_{{1}}t}\right)
\left({1-{q}^{2}y_{{2}}y_{{3}}x_{{1}}}\right)\left({1-{y_{{3}}}^{2}{q}^{3}x_{{2}}y_{{2}}x_{{1}}}\right)(1-x_3)},\\
F_{21}&=\frac{1} {\left({1-tx_{{2}}}\right)\left({1-qx_{{2}}y_{{3}}}\right)\left({1-x_{{1}}{t}^{2}}\right)
\left({1-x_{{2}}qy_{{2}}x_{{1}}t}\right)(1-x_3)},\\
F_{22}&=\frac{y_{{3}} x_{{2}} y_{{2}} q ^{2}x_{{1}} \left({t+y_{{3}}q}\right)} {\left({1-x_{{1}}{t}^{2}}\right)\left({1-qx_{{2}}y_{{3}}}\right)\left({1-x_{{2}}qy_{{2}}x_{{1}}t}\right)
\left({1-{y_{{3}}}^{2}{q}^{3}x_{{2}}y_{{2}}x_{{1}}}\right)(1-x_3)}.
\end{align*}

By adding the above four formulas and setting $y_2=y_3=1$, we obtain
\begin{align}
F(x_1&,x_2,x_3,1,1,q,t) \nonumber \\
=&\frac{\left({1-q{t}^{2}x_{{1}}x_{{2}}}\right)\left({1-{q}^{2}tx_{{1}}x_{{2}}}\right)} {\left({1-qx_{{2}}}\right)\left({1-tx_{{2}}}\right)\left({1-qtx_{{1}}}\right)\left({1-{t}^{2}x_{{1}}}\right)
\left({1-{q}^{2}x_{{1}}}\right)\left({1-qtx_{{1}}x_{{2}}}\right)(1-x_3)}
\nonumber\\
=&\frac 1 {\left(1-qtx_1\right)\left(1-qtx_1x_2\right)(1-x_3)} \left(  \frac 1 { 1-{t}^{2}x_{{1}}}+{\frac {qx_{{2}}}{1-qx_
{{2}}}} \right)  \left(  \frac 1 {1-{q}^{2}x_{{1}}}+{
\frac {tx_{{2}}}{1-tx_{{2}}}} \right).\label{qtsym-formual}
\end{align}
The $q,t$-symmetry clearly follows from \eqref{qtsym-formual}.

\section{A Bijective Proof of Theorem \ref{t-qtsym}\label{s-bije-proof}}

In this section, we use the following notation to describe a Dyck path $D$ to obtain a simpler formula of $\bounce(D)$.
For $\k=(a,c,e)$,
A $\k$-Dyck path $D$ can be uniquely determined by the parameters $b,d$ as follows. It starts with
a red arrow $S^{a}$ (i.e. up step $(1,a)$) followed by $b$ blue arrows $W$ (down step $(1,-1)$),
then a red arrow $S^{c}$ followed by $d$ blue arrows $W$, and a red arrow $S^{e}$ followed by $(a+c+e-b-d)$ blue arrows $W$.
Clearly, the red ranks of $D$ are $r_1=0, \ r_2=a-b $ and $ r_3=a-b+c-d$.
See \cite{dinv-area,Xin-Zhang} for detailed concepts,
which are irrelevant here.

The formula of $\area(D)$ is as follows
$$\area(D)=r_2+r_3=2a-2b+c-d.$$
We recall that the formula of $\bounce(D)=b(r_2,r_3)$, i.e.,
$$b(r_2,r_3)=\left\{
               \begin{array}{ll}
                 2(k_1-r_2)+r_2+k_2-r_3-\min(r_2,k_2), & \text{ if } r_2+k_2-r_3 \geq 2\min(r_2,k_2) ; \\
                 2(k_1-r_2)+ \lceil {\frac{r_2+k_2-r_3} 2}\rceil, & \text{otherwise.}
               \end{array}
             \right.
$$
Then, we have a simpler formula of $\bounce(D)=B(b,d)$ as follows:
$$B(b,d)=\left\{
               \begin{array}{ll}
                 2b+d-\min(a-b,c), & \text{ if } d \geq 2\min(a-b,c) ; \\
                 2b+ \lceil {\frac{d} 2}\rceil, & \text{otherwise.}
               \end{array}
             \right.
$$

We will give an involution (a bijection whose square is the identity map) on $\CD_{\k}$ for $\k=(a,c,e)$ which interchanges $\area$ and $\bounce$.
If $a \leq c$, we have $\min(a-b,c)= a-b$ and the formula of
$B(b,d)$ becomes simpler. 
So we divide the involution into two sub-sections: one for the case $a\le c$ and the other for the case $a>c$.

Now we need to give the following Lemma.
\begin{lem}\label{lem-ymula}
Giving two non-negative integers $c$ and $d$. If the following three conditions hold
\begin{enumerate}
  \item $Y=\chi((c+\lceil {\frac {d} 2}\rceil) ~is ~odd)$;
  \item $d'=2\lfloor {\frac{d} 2}\rfloor+Y$;
  \item $Y'=\chi((c+\lceil {\frac {d'} 2}\rceil) ~is ~odd)$.
\end{enumerate}
Then $Y'=\chi(d ~is ~odd)$.
\end{lem}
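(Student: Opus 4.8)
The plan is to reduce the whole statement to a single congruence modulo $2$, using the elementary observation that for any integer $n$ the indicator $\chi(n\text{ is odd})$ equals the residue of $n$ modulo $2$; that is, $\chi(n\text{ is odd})\equiv n\pmod 2$ with the left-hand side lying in $\{0,1\}$. Working in $\Z/2\Z$ turns the nested ceiling-and-parity construction into plain addition and lets the floors and ceilings collapse.

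First I would record two elementary identities about $d$ and $d'$. The first is the standard $\lfloor d/2\rfloor+\lceil d/2\rceil=d$, valid for every non-negative integer $d$. The second concerns $d'=2\lfloor d/2\rfloor+Y$: since $Y\in\{0,1\}$, this is exactly the base-$2$ description of $d'$ with quotient $\lfloor d/2\rfloor$ and remainder $Y$, so $\lceil d'/2\rceil=\lfloor d/2\rfloor+Y$ precisely (both when $Y=0$, where $d'$ is even, and when $Y=1$, where $d'$ is odd). Establishing this clean formula for $\lceil d'/2\rceil$ is the one place where I would be careful, because it is where the ceiling in the definition of $Y'$ interacts with the floor used to build $d'$; once it is in hand the rest is automatic.

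With these identities, I would substitute into $Y'=\chi\big((c+\lceil d'/2\rceil)\text{ is odd}\big)$ and pass to residues mod $2$. By the second identity, $Y'\equiv c+\lceil d'/2\rceil = c+\lfloor d/2\rfloor+Y\pmod 2$. Replacing $Y$ by its defining residue $Y\equiv c+\lceil d/2\rceil\pmod 2$ gives $Y'\equiv 2c+\lfloor d/2\rfloor+\lceil d/2\rceil\equiv \lfloor d/2\rfloor+\lceil d/2\rceil = d\pmod 2$, where the last equality uses the first identity. Hence $Y'\equiv d\pmod 2$, and since $Y'\in\{0,1\}$ this says exactly $Y'=\chi(d\text{ is odd})$, as claimed.

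Because the argument is just a mod-$2$ computation, there is no genuine obstacle; the only thing to watch is the bookkeeping of the identity $\lceil d'/2\rceil=\lfloor d/2\rfloor+Y$, which hinges on $Y$ being a single bit. If one prefers to avoid congruence language entirely, the identical conclusion can be reached by the two-case split on the parity of $d$ (writing $d=2m$ or $d=2m+1$) together with a sub-split on the value of $Y$; this produces four short cases, each verified by direct substitution, but I find the modular reduction cleaner and less error-prone.
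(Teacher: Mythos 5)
Your proof is correct and rests on the same key identity as the paper's, namely $c+\lceil d'/2\rceil=c+\lfloor d/2\rfloor+Y$; the only difference is that you finish with a one-line reduction modulo $2$ (using $\lfloor d/2\rfloor+\lceil d/2\rceil=d$ and $Y\equiv c+\lceil d/2\rceil$), whereas the paper verifies the same parity conclusion by the four-case split you mention as an alternative. This is a cleaner write-up of essentially the same argument.
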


\begin{proof}
We have $$c+\lceil {\frac {d'} 2}\rceil=c+\lfloor {\frac{d} 2}\rfloor+Y$$
and
$$c+\lfloor {\frac{d} 2}\rfloor+Y=\left\{
  \begin{array}{ll}
    c+\lfloor {\frac{d} 2}\rfloor+1=c+\lceil {\frac {d} 2}\rceil, & \hbox{ if } c+\lceil {\frac {d} 2}\rceil \text{is odd and } d \text{ is odd}; \\
   c+\lfloor {\frac{d} 2}\rfloor+1=c+\lceil {\frac {d} 2}\rceil+1, & \hbox{ if } c+\lceil {\frac {d} 2}\rceil \text{is odd and } d \text{ is even}; \\
    c+\lfloor {\frac{d} 2}\rfloor=c+\lceil {\frac {d} 2}\rceil-1,  & \hbox{ if } c+\lceil {\frac {d} 2}\rceil \text{is even and } d \text{ is odd}; \\
    c+\lfloor {\frac{d} 2}\rfloor=c+\lceil {\frac {d} 2}\rceil, & \hbox{ if } c+\lceil {\frac {d} 2}\rceil \text{is even and } d \text{ is even}.
  \end{array}
\right.
$$
So,
$$c+\lceil {\frac {d'} 2}\rceil\left\{
  \begin{array}{ll}
    is ~odd, & \hbox{ if } d \text{ is odd}; \\
    is ~even, & \hbox{ if } d \text{ is even}.
  \end{array}
\right.
$$
We get
$$ Y'=\chi(d ~is ~odd).$$
\end{proof}

\subsection{$\k=(a,c,e)$ with $a \leq c$}
The formula of $B(b,d)$ is
$$B(b,d)=\left\{
               \begin{array}{ll}
                 3b+d-a,    & \text{ if }   2(a-b) \leq d ; \\
                 2b+ \lceil {\frac{d} 2}\rceil, & \text{ if }  2(a-b) > d.
               \end{array}
             \right.
$$
Here, we give a map
$\varphi: \oD\mapsto \varphi(\oD)$ 
where $\varphi(\oD)$ is determined by its two values $(b', d')$,
i.e., $\varphi(b,d) = (b',d')$ which $(b', d')$ is defined as follows:
$$(b', d')=\left\{
               \begin{array}{ll}
                 (b, 3a-5b+c-d), & \text{ if } 2(a-b) \leq d   \text{ and } 3b+d-a \leq c ; \\
                 (\frac{a-b+c-d-X} 2, 2a-2b+X),    & \text{ if } 2(a-b) \leq d   \text{ and } 3b+d-a>c ; \\
                 (a-\lfloor {\frac{d} 2}\rfloor, 2d-2b+c-3\lceil {\frac{d} 2}\rceil)  & \text { if } 2(a-b) > d  \text{ and } 2b+ \lceil {\frac{d} 2}\rceil \leq c; \\
                 (a-b-d+ {\frac{c+\lceil {\frac d 2}\rceil-Y} 2}, 2\lfloor {\frac{d} 2}\rfloor+Y), & \text { if } 2(a-b) > d  \text{ and } 2b+ \lceil {\frac{d} 2}\rceil > c.
               \end{array}
             \right.
$$
where $X$ is $\chi((a-b+c-d)~ is ~odd)$ and $Y$ is $\chi((c+\lceil {\frac d 2}\rceil) ~is ~odd)$.

According to the formulas of $B(b,d)$ and $(b',d')$, we define the classification situation as follows:

\begin{description}
  \item[L1] $ 2(a-b) \leq d $,
  \item[L2] $ 2(a-b) > d$;
\end{description}
and
\begin{description}
  \item[L11] $ 2(a-b) \leq d   \text{ and } 3b+d-a \leq c$,
  \item[L12] $2(a-b) \leq d   \text{ and } 3b+d-a>c$,
  \item[L21] $2(a-b) > d  \text{ and } 2b+ \lceil {\frac{d} 2}\rceil \leq c $,
  \item[L22] $2(a-b) > d  \text{ and } 2b+ \lceil {\frac{d} 2}\rceil > c$.
\end{description}

\begin{prop}\label{p-leqinvolution}
the map $\varphi$ is an $\emph{involution}$ on $\CD_{\k}$
for $\k=(a,c,e)$ with $a \leq c$ and it interchanges $\area$ and $\bounce$.
\end{prop}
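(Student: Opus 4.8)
The plan is to prove Proposition \ref{p-leqinvolution} by a direct, region-by-region analysis. In each of the four cases \textbf{L11}, \textbf{L12}, \textbf{L21}, \textbf{L22} I would check the three things that together constitute the claim: that $\varphi(\oD)=(b',d')$ is a genuine element of $\CD_{\k}$, that $\area(b',d')=\bounce(b,d)$ and $\bounce(b',d')=\area(b,d)$, and that $\varphi\circ\varphi$ is the identity. Since the four cases are cut out by the inequalities $2(a-b)\le d$ versus $>$, and then $3b+d-a\le c$ versus $>$ (in \textbf{L1}) or $2b+\lceil d/2\rceil\le c$ versus $>$ (in \textbf{L2}), they clearly partition $\CD_{\k}$.

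First I would record the data in $(b,d)$-coordinates: a pair is valid iff $0\le b\le a$ and $0\le d\le a-b+c$ (equivalently $0\le r_2\le a$ and $0\le r_3\le r_2+c$), the area is $2a-2b+c-d$, and $B(b,d)$ is as displayed. Because each formula for $b'$ carries a factor $\tfrac12$, the first point to settle is integrality: in \textbf{L12} and \textbf{L22} the indicators $X=\chi(a-b+c-d \text{ odd})$ and $Y=\chi(c+\lceil d/2\rceil \text{ odd})$ are chosen precisely so that $a-b+c-d-X$ and $c+\lceil d/2\rceil-Y$ are even, making $b'$ an integer. I would then substitute the four formulas into $\area(b',d')=2a-2b'+c-d'$ and verify it equals $\bounce(b,d)$; these are short linear identities whose only subtlety is the repeated use of $\lfloor d/2\rfloor+\lceil d/2\rceil=d$ and $\lceil d/2\rceil-\lfloor d/2\rfloor=\chi(d \text{ odd})$.

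The heart of the argument is to identify the target region of each case, since this both fixes the form of $\bounce(b',d')$ and reveals the pairing. For each case I would compute $2(a-b')-d'$ and the secondary quantity ($3b'+d'-a-c$ in \textbf{L1}, or $2b'+\lceil d'/2\rceil-c$ in \textbf{L2}); the defining inequalities then force the pattern $\textbf{L11}\to\textbf{L11}$, $\textbf{L12}\to\textbf{L21}$, $\textbf{L21}\to\textbf{L12}$, $\textbf{L22}\to\textbf{L22}$. With the target region known, the matching branch of $B$ gives $\bounce(b',d')=\area(b,d)$, and simultaneously I would confirm the red-rank inequalities $0\le b'\le a$ and $0\le d'\le a-b'+c$ by invoking the inequalities defining the case (for instance in \textbf{L11} one gets $d'\ge 2(a-b)\ge 0$ at once). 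This validity check is a secondary obstacle: routine, but required in every case.

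Finally the involution property follows from the pairing by composing. For \textbf{L11} and for the pair $\textbf{L12}\leftrightarrow\textbf{L21}$ the composition $\varphi\circ\varphi$ is a purely linear computation returning $(b,d)$, the parity bookkeeping in the $\textbf{L21}\to\textbf{L12}$ direction being handled by the identity $a-b'+c-d'=2b+\chi(d \text{ odd})$, whence the recomputed indicator $X'=\chi(d \text{ odd})$ and $b''=b$, $d''=d$ drop out. The genuinely delicate case is the self-paired \textbf{L22}: here $d'=2\lfloor d/2\rfloor+Y$ is exactly the quantity appearing in Lemma \ref{lem-ymula}, so that lemma yields $Y'=\chi(d \text{ odd})$, after which $d''=2\lfloor d/2\rfloor+Y'=d$ and the telescoping for $b''$ collapses to $b''=b$. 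I expect \textbf{L22}, and the parity tracking it demands, to be the main obstacle, and Lemma \ref{lem-ymula} is precisely the tool that dispatches it.
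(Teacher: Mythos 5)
Your plan is correct and follows essentially the same route as the paper's proof: a case-by-case verification over \textbf{L11}, \textbf{L12}, \textbf{L21}, \textbf{L22} of membership in $\CD_{\k}$, the pairing pattern $\textbf{L11}\to\textbf{L11}$, $\textbf{L12}\leftrightarrow\textbf{L21}$, $\textbf{L22}\to\textbf{L22}$, the interchange of $\area$ and $\bounce$, and the involution property, with Lemma \ref{lem-ymula} resolving the parity issue in the self-paired case \textbf{L22}. The only addition beyond the paper's write-up is your explicit remark on the integrality of $b'$, which is a worthwhile (and correct) observation.
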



We prove Proposition $\ref{p-leqinvolution}$ in two steps as follows:
\begin{enumerate}
   \item[step 1:]
   \begin{enumerate}
  \item $D=\varphi(\oD)$ is a Dyck path, i.e., $D \in \CD_{\k}$, satisfying $a-b' \geq 0$ and $a-b'+c-d' \geq 0$;
  \item the map $\varphi$ is an $\emph{involution}$, i.e., $\varphi(b',d')=(b'',d'')=(b,d)$.
       Indeed, the map $\varphi$ is also an involution when restricted to condition L11, and the same situation holds for L22;
       It exchanges conditions L12 and L21. See Figure \ref{fig:leq}.
   \end{enumerate}
  \item[step 2:] The map $\varphi$ interchanges $\area$ and $\bounce$, i.e., $\area(D)=\bounce(\oD)$ and $\bounce(D)=\area(\oD)$.
\end{enumerate}
\begin{figure}[!ht]
  $$
 \hskip 0.05in\vcenter { \includegraphics[height= 2.06 in]{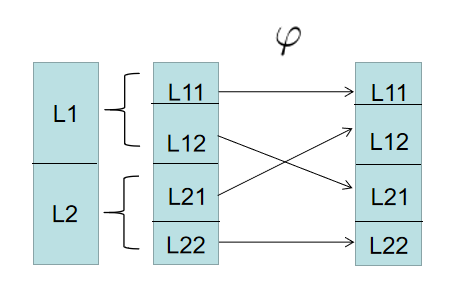}}
$$
\caption{ The $\emph{involution}$ $\varphi$ and its image.
\label{fig:leq}}
\end{figure}
\begin{proof}
Recall that $a \leq c$, $a-b \geq 0$ and $a-b+c-d \geq 0$.

\medskip
\begin{enumerate}
\item[step 1:]
Case 11: When condition L11 holds, we have $(b',d')=(b, 3a-5b+c-d)$.

We can obtain
\begin{enumerate}
  \item \begin{align*}
            a-b'&=a-b \geq 0; \\
            a-b'+c-d'&=a-b+c-(3a-5b+c-d)=d-2(a-b)+2b \geq 2b \geq 0.
        \end{align*}
  \item It is easy to check that the condition L11 for $(b',d')$ holds:
  $$2(a-b')-d'=2(a-b)-(3a-5b+c-d)=3b+d-a-c \leq 0$$ and $$3b'+d'-a=3b+(3a-5b+c-d)-a=2(a-b)-d+c \leq c.$$
So,
\begin{align*}
(b'', d'')=(b',3a-5b'+c-d')=(b,3a-5b+c-(3a-5b+c-d))=(b,d).
\end{align*}
\end{enumerate}
Case 12: When condition L12 holds, we have
$(b', d')=({\frac{a-b+c-d-X} 2}, 2a-2b+X).$

We can obtain
\begin{enumerate}
  \item \begin{align*}
        a-b'&=a- {\frac{a-b+c-d-X} 2} = \frac{a+b-c+d+X} 2; \\
        a-b'+c-d'&=a-{\frac{a-b+c-d-X} 2}+ c -(2a-2b+X)\\
                 &=\frac{-3a+5b+c+d-X} 2.
        \end{align*}
    Recall that condition L12: $2(a-b) \leq d$ and $3b+d-a>c$. Then $2b \geq 2a-d$, $3b>a+c-d$ and $a+b-c+d > 2a-2b \geq 0$.
    We have $5b>3a+c-2d$.
    Thus,
        \begin{align*}
        &a-b' = \frac{a+b-c+d+X} 2 > 0; \\
        &a-b'+c-d' =\frac{-3a+5b+c+d-X} 2 > \frac{2c-d-X} 2.
        \end{align*}
        By $a\leq c$ and $d\leq a+c \leq 2c$, we have $2c-d-X <0 $ only if $2c=d$ and $X=1$.
        But when $2c=d$, we have $a=c$, $b=0$ and $a-b+c-d=0$, i.e., $X=0$.
         This contradicts with $X=1$. So, $a-b'+c-d' > 0$.
  \item It is easy to check that the condition L21 for $(b',d')$ holds:
    \begin{align*}
    2(a-b')-d'&=2a-(a-b+c-d-X)-(2a-2b+X)\\
    &=2a+c-(a\!-\!b\!-\!d)-2a+2b\\
    &=3b+d-a-c > 0
    \end{align*}
  and
    \begin{align*}
    2b'+\lceil {\frac{d'} 2}\rceil &=a-b+c-d-X+a-b+X \\
    &=2a-2b-d+c \leq c.
    \end{align*}
So,
\begin{align*}
(b'', d'')&=(a-\lfloor {\frac{d'} 2}\rfloor, 2d'-2b'+c-3\lceil {\frac{d'} 2}\rceil)\\
&=(a-(a-b),4a-4b+2X-(a-b+c-d-X)+c-3(a-b+X)\\
&=(b,d).
\end{align*}
\end{enumerate}
Case 21: When condition L21 holds, we have $$(b', d')=(a-\lfloor {\frac{d} 2}\rfloor, 2d-2b+c-3\lceil {\frac{d} 2}\rceil).$$

We can obtain
\begin{enumerate}
  \item \begin{align*}
         a-b'&=a-(a-\lfloor {\frac{d} 2}\rfloor)=\lfloor {\frac{d} 2}\rfloor \geq 0; \\
         a-b'+c-d'&=\lfloor {\frac{d} 2}\rfloor+c-(2d-2b+c-3\lceil {\frac{d} 2}\rceil)=2b-d+2\lceil {\frac{d} 2}\rceil \geq 0.
         \end{align*}
  \item It is easy to check that the condition L12 for $(b',d')$ holds:
  \begin{align*}
                2(a-b')-d'&=2a-2b'-d' \\
                        &=2a-2(a-\lfloor {\frac{d} 2}\rfloor)-(2d-2b+c-3\lceil {\frac{d} 2}\rceil)) \\
                        &=2b+ \lceil {\frac{d} 2}\rceil-c \leq 0
            \end{align*}
  and
  \begin{align*}
               3b'+d'-a&=3(a-\lfloor {\frac{d} 2}\rfloor)+(2d-2b+c-3\lceil {\frac{d} 2}\rceil)-a\\
                       &=2(a-b)-d+c > c.
            \end{align*}
  So,
            $$(b'', d'')=( {\frac{a-b'+c-d'-X'} 2}, 2a-2b'+X')$$
        where $X'$ is $\chi(a-b'+c-d' ~is ~odd)$ which is equal to $\chi(d ~ is ~odd)$ by the formula $a-b'+c-d'=2b-d+2\lceil {\frac{d} 2}\rceil$ from $(a)$. Then we have
            \begin{align*}
                (b'', d'')&=(b+\lceil {\frac{d} 2}\rceil-{\frac{d} 2}-{\frac{X'} 2},2a-2(a-\lfloor {\frac{d} 2}\rfloor)+X')\\
                          &=(b+\lceil {\frac{d} 2}\rceil-{\frac{d+X'} 2}, 2\lfloor {\frac{d} 2}\rfloor+X')\\
                          &=(b,d).
            \end{align*}
\end{enumerate}
Case 22: When condition L22 holds, we have
$$(b',d')=(a-b-d+ {\frac{c+\lceil {\frac d 2}\rceil-Y} 2}, 2\lfloor {\frac{d} 2}\rfloor+Y).$$

We can obtain
\begin{enumerate}
  \item  \begin{align*}
                a-b'&=a-(a-b-d+ {\frac{c+\lceil {\frac d 2}\rceil-Y} 2})
                ={\frac{2b+d-c+\lfloor {\frac d 2}\rfloor+Y} 2}; \\
           a-b'+c-d'&={\frac{2b+2d-c-\lceil {\frac d 2}\rceil+Y} 2}+c-(2\lfloor {\frac{d} 2}\rfloor+Y)\\
                    &={\frac{2b+2d+c-\lceil {\frac d 2}\rceil+Y-4\lfloor {\frac{d} 2}\rfloor-2Y} 2}\\
                    &={\frac{2b+2d+c-\lceil {\frac d 2}\rceil-4\lfloor {\frac{d} 2}\rfloor-Y} 2}\\
                    &={\frac{2b+d+c-3\lfloor {\frac{d} 2}\rfloor-Y} 2}.
         \end{align*}
        Recall that condition L22:  $2(a-b) > d$ and $2b+\lceil {\frac d 2}\rceil > c$.
        We are in the case $a\leq c$. Then $2b+\lceil {\frac d 2}\rceil > c \geq a > {\frac d 2}+b$ and
        $2b+d-c+\lfloor {\frac d 2}\rfloor+Y > 0$.

        We know $$a-b' > 0$$ and
            $2b+d+c-3\lfloor {\frac{d} 2}\rfloor-Y >3b+{\frac{3d} 2}-3\lfloor {\frac{d} 2}\rfloor-Y > 0$, i.e.,
            $$a-b'+c-d' >0.$$
  \item It is easy to check that the condition L22 for $(b',d')$ holds:
   \begin{align*}
              2(a-b')-d'&=2a-2(a-b-d+ {\frac{c+\lceil {\frac d 2}\rceil-Y} 2})-(2\lfloor {\frac{d} 2}\rfloor+Y)\\
                        &=2b+2d-c-\lceil {\frac d 2}\rceil+Y-2\lfloor {\frac{d} 2}\rfloor - Y\\
                        &=2b+ \lceil {\frac{d} 2}\rceil-c >0
            \end{align*}
  and
  \begin{align*}
                2b'+ \lceil {\frac{d'} 2}\rceil&=2(a-b-d+ {\frac{c+\lceil {\frac d 2}\rceil-Y} 2})+\lfloor {\frac{d} 2}\rfloor+Y\\
                          &=2a-2b-2d+ c+\lceil {\frac d 2}\rceil-Y+\lfloor {\frac{d} 2}\rfloor+Y\\
                          &=2a-2b-d+c > c.
            \end{align*}
  So,
            \begin{align*}
                (b'', d'')&=(a-b'-d'+ {\frac{c+\lceil {\frac {d'} 2}\rceil-Y'} 2}, 2\lfloor {\frac{d'} 2}\rfloor+Y')\\
                &=(a-(a-b-d+ {\frac{c+\lceil {\frac d 2}\rceil-Y} 2})-(2\lfloor {\frac{d} 2}\rfloor+Y)+ {\frac{c+\lfloor {\frac{d} 2}\rfloor+Y-Y'} 2},2\lfloor {\frac{d} 2}\rfloor+Y')\\
                &=(b + {\frac {d-2\lfloor{\frac{d} 2}\rfloor-Y'} 2},2\lfloor {\frac{d} 2}\rfloor+Y')
            \end{align*}
        where $Y'$ is $\chi((c+\lceil {\frac {d'} 2}\rceil) ~is ~odd)$.

        Recall that $Y$ is $\chi((c+\lceil {\frac {d} 2}\rceil) ~is ~odd)$ and $d'=2\lfloor {\frac{d} 2}\rfloor+Y$.
        So, by Lemma \ref{lem-ymula}, we have
        $$(b'', d'')=(b + {\frac {d-2\lfloor{\frac{d} 2}\rfloor-Y'} 2},2\lfloor {\frac{d} 2}\rfloor+Y')=(b,d).$$
\end{enumerate}

\item[step 2:]
Case 11:  When condition L11 holds, we have $\area(\oD)=2a-2b+c-d$ and $\bounce(\oD)=3b+d-a$.
    We calculate
    \begin{align*}
      \area(D)&=2a-2b'+c-d'=2a-2b+c-(3a-5b+c-d)\\
      &=3b+d-a=\bounce(\oD);\\
    \bounce(D)&=3b'+d'-a=3b+(3a-5b+c-d)-a \\
           &=2a-2b+c-d=\area(\oD).
    \end{align*}
Case 12:  When condition L12 holds, we have $\area(\oD)=2a-2b+c-d$ and $\bounce(\oD)=3b+d-a$.
We calculate
\begin{align*}
\area(D)&=2a-2b'+c-d' = 2a-(a-b+c-d-X) +c-(2a-2b+X)\\
        &=2a+c-(a\!-\!b\!+\!c\!-\!d)-2a+2b\\
        &=3b+d-a=\bounce(\oD);\\
\bounce(D)&=2b'+\lceil {\frac{d'} 2}\rceil = a-b+c-d-X+a-b+X \\
          &=2a-2b+c-d=\area(\oD).
\end{align*}
Case 21:  When condition L21 holds, we have $\area(\oD)=2a-2b+c-d$ and $\bounce(\oD)=2b+ \lceil {\frac{d} 2}\rceil$.
        We calculate
            \begin{align*}
            \area(D)&=2a-2b'+c-d' \\
                        &=2a-2(a-\lfloor {\frac{d} 2}\rfloor)+c-(2d-2b+c-3\lceil {\frac{d} 2}\rceil)) \\
                        &=2b+ \lceil {\frac{d} 2}\rceil=\bounce(\oD);\\
            \bounce(D)&=3b'+d'-a \\
                      &=3(a-\lfloor {\frac{d} 2}\rfloor)+(2d-2b+c-3\lceil {\frac{d} 2}\rceil)-a\\
                      &=2a-2b+c-d=\area(\oD).
            \end{align*}
Case 22:  When condition L12 holds, we have $\area(\oD)=2a-2b+c-d$ and $\bounce(\oD)=2b+ \lceil {\frac{d} 2}\rceil$.
        We calculate
            \begin{align*}
                \area(D)&=2a-2b'+c-d' \\
                        &=2a-2(a-b-d+ {\frac{c+\lceil {\frac d 2}\rceil-Y} 2})+c-(2\lfloor {\frac{d} 2}\rfloor+Y)\\
                        &=2b+2d-c-\lceil {\frac d 2}\rceil+Y+c-2\lfloor {\frac{d} 2}\rfloor - Y\\
                        &=2b+ \lceil {\frac{d} 2}\rceil=\bounce(\oD);\\
              \bounce(D)&=2b'+ \lceil {\frac{d'} 2}\rceil \\
                        &=2(a-b-d+ {\frac{c+\lceil {\frac d 2}\rceil-Y} 2})+\lfloor {\frac{d} 2}\rfloor+Y\\
                        &=2a-2b-2d+ c+\lceil {\frac d 2}\rceil-Y+\lfloor {\frac{d} 2}\rfloor+Y\\
                        &=2a-2b+c-d=\area(\oD).
            \end{align*}
\end{enumerate}
\end{proof}
\subsection{$\k=(a,c,e)$ with $a > c$}
The formula of $B(b,d)$ is
$$B(b,d)=\left\{
               \begin{array}{ll}
                 2b-c+d,    & \text{ if } a-b > c  \text{ and }  2c \leq d;\\
                 3b+d-a,    & \text{ if } a-b \leq c  \text{ and }  2(a-b) \leq d ; \\
                 2b+ \lceil {\frac{d} 2}\rceil, & \text{ if } \min(2(a-b),2c)> d.
               \end{array}
             \right.
$$

Here, we give a map
$\psi: D\mapsto \psi(D) \in \CD_{\k}$, where $\psi(D)$ is determined by its two values $(b', d')$,
i.e., $\psi(b,d) = (b',d')$ which $(b',d')$ is defined as follows:
\begin{align*}
(b', d')&=\left\{
               \begin{array}{ll}
                 (a-b+c-d, d),    & \text{ if } a-b > c  \text{ and }  2c \leq d \\
                                  &\text{ except } b = 0 \text { and } d = 2c; \\
                 (a-c, d),        & \text{ if }  b=0 \text{ and }  d = 2c;\\
                 (0,d),           &\text{ if } b = a-c \text { and } d = 2(a-b); \\
                 ( {\frac{a-b+c-d-X} 2}, 2a-2b+X),    & \text{ if } a-b \leq c  \text{ and }  2(a-b) \leq d \\
                                                      &\text{ except } b = a-c \text { and } d = 2(a-b); \\
                 (a-\lfloor {\frac{d} 2}\rfloor, 2d-2b+c-3\lceil {\frac{d} 2}\rceil),  & \text { if }\min(2(a-b),2c)> d \\
                                                                                       &\text{  and } 2b+\lceil {\frac{d} 2}\rceil \leq c;  \\
                 (a-b-d- {\frac{c+\lceil {\frac d 2}\rceil-Y} 2}, 2\lfloor {\frac{d} 2}\rfloor+Y), & \text { if }\min(2(a-b),2c)> d \\
                                                                                                   &\text{ and } 2b+\lceil {\frac{d} 2}\rceil > c.
               \end{array}
             \right.
\end{align*}
where $X$ is $\chi((a-b+c-d) ~is ~odd)$ and $Y$ is $\chi((c+\lceil {\frac d 2}\rceil) ~is ~odd)$.

According to the formulas of $B(b,d)$ and $(b',d')$, we define the classification situation as follows:

\begin{description}
  \item[G1] $ a-b > c $ and $2c \leq d$,
  \item[G2] $ a-b \leq c$ and $2(a-b) \leq d$,
  \item[G3] $\text{min}(2(a-b),2c) > d$;
\end{description}
and
\begin{description}
  \item[G11] $ a-b > c $ and $2c \leq d$ except $b=0$ and $2c=d$(G1/G12),
  \item[G12] $b=0$ and $2c=d$,
  \item[G21] $a-b=c$ and $2(a-b)=d$,
  \item[G22] $ a-b \leq c$ and $2(a-b) \leq d$ except $a-b=c$ and $2(a-b)=d$(G2/G21),
  \item[G31] $\text{min}(2(a-b),2c) > d  \text{ and } 2b+ \lceil {\frac{d} 2}\rceil \leq c $,
  \item[G32] $\text{min}(2(a-b),2c) > d  \text{ and } 2b+ \lceil {\frac{d} 2}\rceil > c$.
\end{description}
\begin{prop}\label{p-geqinvolution}
The map $\psi$ is an $\emph{involution}$ on $\CD_{\k}$
for $\k=(a,c,e)$ with $a > c$ and it interchanges $\area$ and $\bounce$.
\end{prop}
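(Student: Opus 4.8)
The plan is to mirror, region by region, the two-step argument already used for Proposition~\ref{p-leqinvolution}, now over the six regions G11, G12, G21, G22, G31, G32 that partition $\CD_{\k}$ in the case $a>c$. In Step~1(a) I would substitute each branch of $\psi$ into the Dyck-path conditions $a-b'\ge 0$ and $a-b'+c-d'\ge 0$ and verify them from the defining inequalities of the relevant region together with $a>c$; these reduce to short linear inequalities, with a floor/ceiling estimate in the G31 and G32 branches. In Step~1(b) I would prove that $\psi$ squares to the identity by first pinning down the pairing of regions: I expect $\psi$ to restrict to an involution on G11 and on G32, and to interchange G12 with G21 and G22 with G31. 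For the pair (G22, G31) the two defining formulas are word-for-word those of L12 and L21, so the verification is algebraically identical to Cases~12 and~21 of Proposition~\ref{p-leqinvolution}; the self-involution on G32 reuses Lemma~\ref{lem-ymula} exactly as Case~22 did; and the self-involution on G11 together with the swap G12$\leftrightarrow$G21 are immediate linear substitutions.

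In Step~2 I would check that $\psi$ swaps the statistics. For each region one computes $\area(D)=2a-2b'+c-d'$ and matches it against $\bounce(\oD)$ read from the appropriate branch of $B(b,d)$, and then computes $\bounce(D)$ from the branch of $B$ selected by the region that the image $(b',d')$ occupies, matching it against $\area(\oD)=2a-2b+c-d$. Since the three branches of $B$ for $a>c$ are piecewise linear, with a ceiling only in the G3 branch, these are again one-line identities, Lemma~\ref{lem-ymula} handling the parity terms in the G31 and G32 branches.

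The feature absent from the $a\le c$ case, and the part I expect to require the most care, is the pair of degenerate boundary configurations G12 ($b=0$, $d=2c$) and G21 ($a-b=c$, $d=2(a-b)$), which are deliberately removed from G11 and G22. The reason is that the generic G11 rule $(b,d)\mapsto(a-b+c-d,d)$ is self-inverse only while $b+d>2c$; precisely at $b=0$, $d=2c$ it produces an image with $a-b'=c$, which sits on the G21 boundary rather than back inside the region $a-b>c$, so the self-involution would fail there. Isolating these points and pairing them directly, G12$\leftrightarrow$G21, restores the involution. Consequently the genuine obstacle is the case-tracking: for every branch of $\psi$ I must show that the image carries exactly the region label of its intended partner (and, for G11 and G32, that the image remains in the same region and avoids the two excluded degenerate points), because this label is what selects the correct branch of $B$ in Step~2 and the correct branch of $\psi$ when checking $\psi^2$. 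Once these region memberships are verified, the remaining computations are of the same elementary type already displayed for Proposition~\ref{p-leqinvolution}.
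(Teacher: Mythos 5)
Your proposal follows essentially the same route as the paper: the paper's own proof is only an outline that appeals to the $a\le c$ case and records exactly the region pairing you identify (G11 and G32 mapped to themselves, G12 exchanged with G21, G22 exchanged with G31, with G12 and G21 each consisting of a single path). Your additional explanation of why the two degenerate configurations must be split off from G11 and G22 is correct and in fact supplies detail the paper omits.
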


\begin{figure}[!ht]
  $$
 \hskip 0.05in\vcenter { \includegraphics[height= 2.06 in]{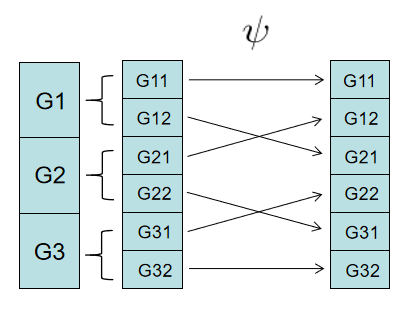}}
$$
\caption{The $\emph{involution}$ $\psi$ and its image.
\label{fig:geq}}
\end{figure}
\begin{proof}
This proof is similar to the case $a \leq c$, but we note that  condition G12 contains only one Dyck path, so does condition G21. They exchange with each other under the map $\psi$.

We only outline the two steps of the proof as follows and omit the details.
\begin{enumerate}
  \item[step 1:]
  \begin{enumerate}
  \item For a given Dyck path $\oD\in \CD_{\k}$, $D=\psi(\oD)$ is also in $\CD_{\k}$, i.e., satisfying $a-b' \geq 0$ and $a-b'+c-d' \geq 0$;
  \item The map $\psi$ is an $\emph{involution}$, i.e., $\psi(b',d')=(b'',d'')=(b,d)$.
  Indeed, the map $\psi$ is also an involution when restricted to condition G11, and the same situation holds for  G32;
  It exchanges the two Dyck paths under the conditions G12 and G21;
  It exchanges conditions G22 and G31. See Figure \ref{fig:geq}.

  \end{enumerate}
  \item[step 2:] The map $\psi$ interchanges $\area$ and $\bounce$, i.e., $\area(D)=\bounce(\oD)$ and $\bounce(D)=\area(\oD)$.
\end{enumerate}
\end{proof}

\section{An Algebraic Proof of $C_{k^4}(q,t)=C_{k^4}(t,q)$ \label{s-Mac-proof2}}
In this section, we use the same idea in section \ref{s-Mac-proof} to prove $C_{k^4}(q,t)=C_{k^4}(t,q)$.
A $k^4$-Dyck path $D$ can be uniquely determined by the parameters $a,b,c$ as follows. It starts with
a red arrow $S^{k}$ (i.e. up step $(1,k)$) followed by $a$ blue arrows $W$ (down step $(1,-1)$), then a red arrow $S^{k}$ followed by $b$ blue arrows $W$, and a red arrow $S^{k}$ followed by $c$ blue arrows $W$, finally a red arrow $S^{k}$ followed by $(4k-a-b-c)$ blue arrows $W$.
The parameters $a,b,c$ satisfy the conditions $0\le a \le k$ , $0\le b \le 2k-a $ and $0\le c \le 3k-a-b$.
The red ranks $(r_1=0,r_2,r_3,r_4)$ of $D$ can be written by the parameters $a,b,c$ as follows.
$$r_2 = k-a, \ r_3=2k-a-b, \  r_4=3k-a-b-c.$$
Then, the area of $D$ is simply
$\area(D)=r_2+r_3+r_4=6k-3a-2b-c$
and the bounce of $D$ is given in \cite{Niu} by
\begin{align}\label{bounceformula4k}
&\bounce(D)=   \nonumber \\
 &\left\{
         \begin{array}{ll}
                 6a+3b+c-4k, & \text{ if } b \geq 2k-2a \text{ and } c \geq 4k-2a-2b; \\
                 5a+2b+\lceil {\frac{c} 2}\rceil-2k, & \text{ if } b \geq 2k-2a \text{ and } c < 4k-2a-2b; \\
                 4a+2b+c-2k, & \text{ if }  b < 2k-2a , b\text{ is even and } c \geq 3k-a-\frac{3b}{2}; \\
                 2a+\frac {b}{2}+k+\lceil{\frac {3a+\frac{3b}{2}+c-3k}{2}}\rceil, & \text{ if }  b < 2k-2a ,  b\text{ is even and } \\
                                                                              & \ \ \    3k-3a-\frac{3b}{2} \leq c < 3k-a-\frac{3b}{2}; \\
                 3a+b+ \lceil {\frac{c} 3}\rceil, & \text{ if }  b < 2k-2a , b\text{ is even and } c < 3k-3a-3s; \\
                 4a+2b+c-2k+1 , & \text{ if }  b < 2k-2a ,  b\text{ is odd and } c \geq 3k-a-\frac{3(b+1)}{2}+1; \\
                 2a+\frac{b+1}{2}+k &
                                      \\
                        \ \ \ \ \ \ \ \   +\lceil{\frac {3a+\frac {3(b+1)}{2}+c-3k-1}{2}}\rceil , & \text{ if }  b < 2k-2a ,  b\text{ is odd and } \\
                                                                                      & \ \ \ 3k-3a-\frac{3(b+1)}{2}+1 \leq c < 3k-a-\frac{3(b+1)}{2}+1; \\
                 3a+b+1+\lceil {\frac{c-1} 3}\rceil  , & \text{ if }  b < 2k-2a ,  b\text{ is odd and } c < 3k-3a-\frac{3(b+1)}{2}+1.
         \end{array}
         \right.
\end{align}

We can construct the generating function with respect to $k,a,b,c$:
\begin{align*}
H(x,y_2,y_3,y_4,q,t) &= \sum_{k\ge 0}x^{k} \sum_{D\in \CD_{K}} q^{\area(D)} t^{\bounce(D)} y_2^{a}y_3^{b}y_4^{c}  \\
                       &= \sum_{k\ge 0}x^{k} \sum_{0\le a\le k, \ 0\le b\le 2k-a, \ 0\le c\le 3k-a-b} q^{6k-3a-2b-c} t^{\bounce(D)} y_2^{a}y_3^{b}y_4^{c},
\end{align*}
where we allowed $k=0$, which does not affect the computation.

Then it is sufficient to prove the $q,t$ symmetry of
$$H(x,1,1,1,q,t)= \sum_{k\ge 0}x^{k} C_{k^4}(q,t).$$

\subsection{Crude generating function}
Due to the piecewise linearity of $\bounce(D)$ in formula (\ref{bounceformula4k}),
we divide the generating function $H(x,y_2,y_3,y_4,q,t)$ into three parts as follows.
\begin{description}
  \item[Part 1] $b\geq 2k - 2a$;
  \item[Part 2] $b < 2k - 2a$ and $b$ is even;
  \item[Part 3] $b < 2k - 2a$ and $b$ is odd.
\end{description}

There are two cases for Part 1 and 3 cases for Part 2 and Part 3 each, so it is convenient to write the generating function $H(x,y_2,y_3,y_4,q,t)=H=H_1+H_2+H_3=H_{11}+H_{12}+H_{21}+H_{22}+H_{23}+H_{31}+H_{32}+H_{33}$.

Part 1: $b\geq 2k - 2a$, i.e., $2a+b-2k \ge 0$. By a similar computation for section \ref{s-Mac-proof}, the formulas $H_{11}$ and $H_{12}$ are given as follows.

   Case 1: $c \geq 4k-2a-2b$, i.e., $2a+2b+c-4k \geq 0$.
   Then, we have $$\bounce(D) =6a+3b+c-4k.$$
and
\begin{align*}
H_{11}=&\sum_{k\ge 0} x^{k} \sum_{0\le a \le k, \ 0\le b\le 2k-a, \  0\le c \le 3k-a-b,  \atop  2a+b-2k \ge 0, \  2a+2b+c-4k \geq 0} q^{6k-3a-2b-c} t^{6a+3b+c-4k} y_2^{a}y_3^{b}y_4^{c}  \\
=&\sum_{k,a,b,c \ge 0} \Oge \left(x^kq^{6k-3a-2b-c} t^{6a+3b+c-4k} y_2^{a}y_3^{b}y_4^{c} \lambda_1^{k-a}\lambda_2^{2k-a-b}\lambda_3^{3k-a-b-c}
\lambda_4^{2a+b-2k}\lambda_5^{2a+2b+c-4k}  \right) \\
=& \Oge \frac {1} {(1-\frac{xq^6\lambda_1\lambda_2^2\lambda_3^3}{t^4\lambda_4^2\lambda_5^4})
(1-\frac{t^6y_2\lambda_4^2\lambda_5^2}{q^3\lambda_1\lambda_2\lambda_3})
(1-\frac{t^3y_3\lambda_4\lambda_5^2}{q^2\lambda_2\lambda_3})
(1-\frac{ty_4\lambda_5}{q\lambda_3})}.
\end{align*}

  Case 2: $c < 4k-2a-2b$, i.e., $4k-2a-2b-c-1 \geq 0$. Then, we have
  \begin{align*}
  \bounce(D)= 5a+2b+\lceil \frac {c} 2 \rceil - 2k,
  \end{align*}
and
\begin{align*}
H_{12}=\Oge \mathop{\Omega}_{\mu=} \frac {1+\mu}{\lambda_5(1-\frac{xq^6\lambda_1\lambda_2^2\lambda_3^3\lambda_5^4}{t^2\lambda_4^2})
(1-\frac{t^5y_2\lambda_4^2}{q^3\lambda_1\lambda_2\lambda_3\lambda_5^2})
(1-\frac{t^2y_3\lambda_4}{q^2\lambda_2\lambda_3\lambda_5^2})
(1-\frac{y_4\mu}{q\lambda_3\lambda_5})
(1-\frac{t}{\mu^2})}.
\end{align*}

Part 2: $b<2k-2a$ (i.e.,$2k-2a-b-1 \ge 0$) and $b$ is even. Let $b=2s$, then $2k-2a-2s-1 \ge 0$ and $\area(D)= 6k-3a-2b-c= 6k-3a-4s-c$.

Case 1: $c\geq 3k-a-\frac{3b}{2}$, i.e., $a+3s+c-3k\geq 0$.
Then, we have $$\bounce(D)=4a+2b+c-2k=4a+4s+c-2k.$$
We can write the generating function $H_{21}$ as follows.
\begin{align*}
&H_{21}= \sum_{k\ge 0}x^{k} \sum_{0\le a\le k, \ 0\le 2s\le 2k-a, \ 0\le c\le 3k-a-2s,\atop 2k-2a-2s-1 \ge 0,\  a+3s+c- 3k \geq 0} q^{6k-3a-4s-c} t^{4a+4s+c-2k} y_2^{a}y_3^{2s}y_4^{c}\\
=&\sum_{k,a,s,c\ge 0} \Oge \left(x^{k}q^{6k-3a-4s-c} t^{4a+4s+c-2k} y_2^{a}y_3^{2s}y_4^{c} \lambda_1^{k-a}\lambda_2^{2k-a-2s}\lambda_3^{3k-a-2s-c}
\lambda_4^{2k-2a-2s-1}\lambda_5^{a+3s+c- 3k} \right) \\
=&
\Oge \frac{1}{\lambda_4(1-\frac{xq^6\lambda_1\lambda_2^2\lambda_3^3\lambda_4^2}{t^2\lambda_5^3})
(1-\frac{t^4y_2\lambda_5}{q^3\lambda_1\lambda_2\lambda_3\lambda_4^2})
(1-\frac{t^4y_3^2\lambda_5^3}{q^4\lambda_2^2\lambda_3^2\lambda_4^2})
(1-\frac{ty_4\lambda_5}{q\lambda_3})}.
\end{align*}

Case 2: $3k-3a-\frac{3b}{2} \leq c < 3k-a-\frac{3b}{2}$, i.e., $3a+3s+c-3k \geq 0$ and $3k-a-3s-c-1 \geq 0$.
Then, we have $$\bounce(D)=2a+\frac {b}{2}+k+\lceil{\frac {3a+\frac{3b}{2}+c-3k}{2}}\rceil=2a+s+k+\lceil{\frac {3a+3s+c-3k}{2}}\rceil.$$

Let $p=\lceil{\frac {3a+3s+c-3k}{2}}\rceil$.
By Lemma \ref{skillformula}, we can write the generating function $H_{22}$ as follows.
\begin{align*}
H_{22}&=\Oge \mathop{\Omega}_{\mu=} \frac {1+\mu}{\lambda_4\lambda_6
(1-\frac{xq^6t\lambda_1\lambda_2^2\lambda_3^3\lambda_4^2\lambda_6^3}{\mu^3\lambda_5^3})
    (1-\frac{t^2y_2\mu^3\lambda_5^3}{q^3\lambda_1\lambda_2\lambda_3\lambda_4^2\lambda_6})
    (1-\frac{ty_3^2\mu^3\lambda_5^3}{q^4\lambda_2^2\lambda_3^2\lambda_4^2\lambda_6^3})
    (1-\frac{y_4\mu\lambda_5}{q\lambda_3\lambda_6})
    (1-\frac{t}{\mu^2})};
\end{align*}

Case 3: $c < 3k-3a-\frac{3b}{2}$, i.e., $3k-3a-3s-c-1 \geq 0$.
Then, we have $$\bounce(D)=3a+b+\lceil{\frac {c}{3}}\rceil=3a+2s+\lceil{\frac {c}{3}}\rceil.$$

Let $p=\lceil{\frac {c}{3}}\rceil$.
By Lemma \ref{skillformula}, we can write the generating function $H_{23}$ as follows.
\begin{align*}
H_{23}&=\Oge \mathop{\Omega}_{\mu=} \frac {1+\mu+\mu^2}{\lambda_4\lambda_5(1-xq^6\lambda_1\lambda_2^2\lambda_3^3\lambda_4^2\lambda_5^3)
   (1-\frac{t^3y_2}{q^3\lambda_1\lambda_2\lambda_3\lambda_4^2\lambda_5^3})(1-\frac{t^2y_3^2}{q^4\lambda_2^2\lambda_3^2\lambda_4^2\lambda_5^3})
   (1-\frac{y_4\mu}{q\lambda_3\lambda_5})(1-\frac{t}{\mu^3})};
\end{align*}

Part 3: $b<2k-2a$ (i.e.,$2k-2a-b-1 \ge 0$) and $b$ is odd. Let $b=2s+1$, then $2k-2a-2s-2 \ge 0$ and $\area(D)= 6k-3a-2b-c= 6k-3a-4s-c-2$.
By a similar computation for Part 2, the formulas $H_{31}, H_{32}, H_{33}$ are given as follows.
\begin{align*}
   H_{31}&=\Oge  \frac {t^3y_3\lambda_5^2}{q^2\lambda_2\lambda_3\lambda_4^2(1-\frac{xq^6\lambda_1\lambda_2^2\lambda_3^3\lambda_4^2}{t^2\lambda_5^3})
   (1-\frac{t^4y_2\lambda_5}{q^3\lambda_1\lambda_2\lambda_3\lambda_4^2})(1-\frac{t^4y_3^2\lambda_5^3}{q^4\lambda_2^2\lambda_3^2\lambda_4^2})
   (1-\frac{ty_4\lambda_5}{q\lambda_3})};\\
    H_{32}&=\Oge \mathop{\Omega}_{\mu=} \frac{t(1+\mu)\mu^2y_3\lambda_5^2}{q^2\lambda_2\lambda_3\lambda_4^2\lambda_6^3
    (1-\frac{xq^6t\lambda_1\lambda_2^2\lambda_3^3\lambda_4^2\lambda_6^3}{\mu^3\lambda_5^3})
    (1-\frac{t^2\mu^3y_2\lambda_5^3}{q^3\lambda_1\lambda_2\lambda_3\lambda_4^2\lambda_6})} \\
    &\ \ \ \ \ \ \ \ \ \ \ \ \ \ \  \ \ \ \ \ \ \ \ \ \ \ \ \ \ \ \ \ \ \ \ \ \ \ \ \ \ \  \times \frac {1}
    {(1-\frac{t\mu^3y_3^2\lambda_5^3}{q^4\lambda_2^2\lambda_3^2\lambda_4^2\lambda_6^3})
    (1-\frac{\mu y_4\lambda_5}{q\lambda_3\lambda_6})(1-\frac{t}{u^2})};\\
   H_{33}&=\Oge \mathop{\Omega}_{\mu=} \frac {t^2(1+\mu+\mu^2)y_3}{q^2\mu\lambda_2\lambda_3\lambda_4^2\lambda_5^3
   (1-xq^6\lambda_1\lambda_2^2\lambda_3^3\lambda_4^2\lambda_5^3)
   (1-\frac{t^3y_2}{q^3\lambda_1\lambda_2\lambda_3\lambda_4^2\lambda_5^3})} \\
   &\ \ \ \ \ \ \ \ \ \ \ \ \ \ \  \ \ \ \ \ \ \ \ \ \ \ \ \ \ \ \ \ \ \ \ \ \ \ \ \ \ \ \times 
   \frac {1}{(1-\frac{t^2y_3^2}{q^4\lambda_2^2\lambda_3^2\lambda_4^2\lambda_5^3})
   (1-\frac{\mu y_4}{q\lambda_3\lambda_5})(1-\frac{t}{\mu^3})}.
\end{align*}

\subsection{Obtain the generating function}
By using the maple package Ell2, we obtain:

\begin{align*}
H_{11}&=\frac{1-{q}^{2}{t}^{8}{x}^{2}y_{{2}}{y_{{3}}}^{3}y_{{4}}} {\left({1-{q}^{2}{t}^{2}x{y_{{3}}}^{2}}\right)\left({1-q{t}^{3}x{y_{{3}}}^{2}y_{{4}}}\right)\left({1-q{t}^{4}xy_{{2}}{y_{{4}}}^{2}}\right)\left({1-q{t}^{5}xy_{{2}}y_{{3}}}\right)\left({1-{t}^{6}xy_{{2}}y_{{3}}y_{{4}}}\right)}
,\\
H_{12}&=\frac{y_{{2}} q ^{2}t ^{3}x \left({q+ty_{{4}}}\right)} {\left({1-{q}^{2}{t}^{2}x{y_{{3}}}^{2}}\right)\left({1-{q}^{3}{t}^{3}xy_{{2}}}\right)\left({1-q{t}^{5}xy_{{2}}y_{{3}}}\right)\left({1-q{t}^{4}xy_{{2}}{y_{{4}}}^{2}}\right)},\\
H_{21}&=\frac{q ^{3}t x y_{{4}} ^{3}} {\left({1-{q}^{2}{t}^{2}x{y_{{3}}}^{2}}\right)\left({1-{q}^{3}tx{y_{{4}}}^{3}}\right)\left({1-q{t}^{3}x{y_{{3}}}^{2}y_{{4}}}\right)\left({1-q{t}^{4}xy_{{2}}{y_{{4}}}^{2}}\right)},\\
H_{22}&=\frac{y_{{2}} y_{{4}} ^{3}x ^{2}t ^{4}q ^{5}\left({q+ty_{{4}}}\right)} {\left({1-{q}^{2}{t}^{2}x{y_{{3}}}^{2}}\right)\left({1-{q}^{3}tx{y_{{4}}}^{3}}\right)\left({1-{q}^{3}{t}^{3}xy_{{2}}}\right)\left({1-q{t}^{4}xy_{{2}}{y_{{4}}}^{2}}\right)},\\
H_{23}&=\frac{q ^{4}x \left({{q}^{2}+t{y_{{4}}}^{2}+qty_{{4}}}\right)} {\left({1-{q}^{2}{t}^{2}x{y_{{3}}}^{2}}\right)\left({1-{q}^{6}x}\right)\left({1-{q}^{3}tx{y_{{4}}}^{3}}\right)\left({1-{q}^{3}{t}^{3}xy_{{2}}}\right)},\\
H_{31}&=\frac{q ^{2}t ^{2}x y_{{4}} y_{{3}} \left({q+ty_{{4}}-{q}^{2}{t}^{3}x{y_{{3}}}^{2}y_{{4}}}\right)} {\left({1-{q}^{2}{t}^{2}x{y_{{3}}}^{2}}\right)\left({1-{q}^{3}tx{y_{{4}}}^{3}}\right)\left({1-q{t}^{3}x{y_{{3}}}^{2}y_{{4}}}\right)\left({1-q{t}^{4}xy_{{2}}{y_{{4}}}^{2}}\right)}
,\\
H_{32}&=\frac{y_{{2}} y_{{3}} y_{{4}} x ^{2}t ^{5}q ^{5}\left({q+ty_{{4}}}\right)} {\left({1-{q}^{2}{t}^{2}x{y_{{3}}}^{2}}\right)\left({1-{q}^{3}tx{y_{{4}}}^{3}}\right)\left({1-{q}^{3}{t}^{3}xy_{{2}}}\right)\left({1-q{t}^{4}xy_{{2}}{y_{{4}}}^{2}}\right)},\\
H_{33}&=\frac{y_{{3}} t ^{2}x q ^{4}\left({1+{q}^{5}xy_{{4}}+{q}^{4}tx{y_{{4}}}^{2}}\right)} {\left({1-{q}^{2}{t}^{2}x{y_{{3}}}^{2}}\right)\left({1-{q}^{6}x}\right)\left({1-{q}^{3}tx{y_{{4}}}^{3}}\right)\left({1-{q}^{3}{t}^{3}xy_{{2}}}\right)}.
\end{align*}

By adding the above eight formulas and setting $y_2=y_3=y_4=1$, we obtain
\begin{align}\label{qtsym-formual2}
H(x,1,1,1,q,t)
=\frac {N}{\left({1-{q}^{3}tx}\right)\left({1-{q}^{2}{t}^{2}x}\right)\left({1-q{t}^{3}x}\right)\left({1-{q}^{6}x}\right)\left({1-{t}^{6}x}\right)}
\end{align}
where
\begin{align*}
N=1 &+ \left( {q}^{5}t+{q}^{4}{t}^{2}+{q}^{3}{t}^{3}+{q}^{2}{t}^{4}+q{t}^{
5}+{q}^{4}t+{q}^{3}{t}^{2}+{q}^{2}{t}^{3}+q{t}^{4} \right) x \\
&+\left( {q}^{6}{t}^{5}+{q}^{5}{t}^{6}-{q}^{7}{t}^{3}-{q}^{6}{t}^{4}-{q}^{5}{t}^{
5}-{q}^{4}{t}^{6}-{t}^{7}{q}^{3}-{q}^{5}{t}^{4}-{q}^{4}{t}^{5}
 \right) {x}^{2} \\
&- \left( {q}^{8}{t}^{8}+{q}^{9}{t}^{6}+{q}^{8}{t}^{7}+
{q}^{7}{t}^{8}+{q}^{6}{t}^{9} \right){x}^{3}.
\end{align*}
Then $C_{k^4}(q,t)=C_{k^4}(t,q)$ clearly follows from \eqref{qtsym-formual2}.

\section{Concluding remark}
In this paper we give two proofs of the $q,t$-symmetry of the generalized $q,t$-Catalan number
$C_{\k}(q,t)$ for $\k$ of length $3$.

For $\k$ of length $n\ge 4$, we conjectured the $q,t$-symmetry of $C_{\lambda}(q,t)$ when
$\lambda=((a+1)^s,a^{n-s})$. MacMahon's partition analysis technique may be used to attack the
conjecture provided that we can find a piecewise linear formula of the bounce statistic.
Such a formula seems DO exist, but becomes complicated even for
$n=4$.


%
%
%


\begin{thebibliography}{99}

\bibitem{Ome-package}
G. E. Andrews, P. Paule, and A. Riese, MacMahon's partition analysis: the Omega package, European J. Combin. 22(2001).


\bibitem{qt-Catalan}
A. Garsia and M. Haiman, A remarkable $q, t$-Catalan sequence and $q$-Lagrange inversion, J. Algebraic
Combinatorics 5 (1996), 191--244.

\bibitem{Garsia-Haglund-proof-positivity}
A. Garsia and J. Haglund. "A proof of the q, t-Catalan positivity conjecture". Discrete Math. 256 (2002), pp. 677--717.


\bibitem{dinv-area}
A. Garsia and G. Xin, Dinv and Area, Electron. J. Combin., 24 (1) (2017), P1.64.


\bibitem{Gorsky-Mazin}
E. Gorsky and M. Mazin, Compactified Jacobians and $q,t$-Catalan Numbers I, J. Combin. Theory Ser. A, 120 (2013), 49--63.


\bibitem{Haglund-bounce}
J. Haglund, \newblock Conjectured Statistics for the $q, t$-Catalan numbers, Advances in Mathematics 175 (2003), 319--334.

\bibitem{Hag-book08}
J. Haglund, The $q, t$-Catalan numbers and the space of diagonal harmonics, with an appendix on the
combinatorics of Macdonald polynomials, AMS University Lecture Series, 2008.

\bibitem{Higher-qt-Catalan}
Nicholas A. Loehr. Conjectured Statistics for the Higher $q,t$-Catalan Sequences[J]. Electronic Journal of Combinatorics, 2005, 12(1):318--344.

\bibitem{Niu}
M. Niu. The $q,t$-symmetry of the generalized $q,t$-Catalan number $C_{\lambda=(k^s,(k-1)^{n-s})}(q,t)$ when $n=4$, M.S. Thesis. Capital Normal University, 2022. (Written in Chinese)


\bibitem{fastCT}
G. Xin, A fast algorithm for MacMahon's partition analysis, Electron. J. Combin. 11 (2004),R58, 20 pp. (electronic).

\bibitem{Xin-Zhang}
G. Xin and Y. Zhang, On the Sweep Map for $\vec{k}$-Dyck Paths, Electron. J. Combin., 26 (3) (2019), P3.63.

\bibitem{Xin-Zhang2}
G. Xin and Y. Zhang, Dinv, Area and Bounce for $\vec{k}$-Dyck Paths, arXiv:2011.04927.

\end{thebibliography}
\end{document}